\documentclass[12pt,a4paper]{article}

\usepackage[utf8]{inputenc}
\usepackage[T1]{fontenc}
\usepackage{graphicx}
\usepackage{epstopdf}
\usepackage[linesnumbered,boxed]{algorithm2e}
\usepackage[margin=20mm]{geometry} %Reduce margin size
\usepackage{amssymb}
\usepackage{latexsym}
\usepackage{listings}
\usepackage{amsfonts}
\usepackage{amsthm}
\usepackage{bm}
\usepackage{amsmath}
\usepackage{mathtools}
\usepackage{thmtools}
\usepackage[linesnumbered]{algorithm2e}
\usepackage{biblatex}
\graphicspath{ {Figures/} } %Use images from file location
\usepackage{color}
\usepackage{standalone} %For including out-sourced code
\usepackage[table,x11names]{xcolor}
\usepackage{soul}
\usepackage{multirow}
\usepackage{multicol}			%For multicol environments
\usepackage{caption}			%USE THESE TWO FOR
\usepackage{subcaption}		%CAPTIONS AND SUBCAPTIONS
\usepackage[colorlinks]{hyperref}
\hypersetup{citecolor = {blue},pdfauthor=author}
\usepackage{tikz}
\usepackage{pgfplots}
\usepackage{sectsty}			%style package for sections
\usepackage{physics}
\usepackage{array}
\usepackage{float}
\usepackage{forest}
\usepackage{xparse}
\usepackage{lipsum}
\usepackage{multicol}
\usepackage{todonotes}

\usetikzlibrary{shapes,arrows,arrows.meta,decorations.pathmorphing,decorations.markings,decorations.pathreplacing,patterns,shapes.geometric}
\pgfplotsset{compat=1.12}
% put your command and environment definitions here
\setcounter{secnumdepth}{3}

%%%% Declare things for cmds

%%%% Document Formatting
\setlength{\parindent}{0mm} %Removes paragraph indents
\setlength{\parskip}{1em} %Remove paragraph indents

%%%% Commands

  %Imaginary number
  %Unit vector
 %Magnetic unit vector
  %Identity matrix
 %Ln norm
  %Vector projection
\newcommand{\R}{\mathbb{R}}

\newcommand{\half}{\frac{1}{2}}

% TABLES
\newcolumntype{C}{>{$}c<{$}}

\newtheorem{theorem}{Theorem}[section]
\theoremstyle{definition}
\newtheorem{lemma}{Lemma}

\declaretheoremstyle[
  numbered=yes,
  numberlike=theorem,
  spaceabove=1em plus 0.75em minus 0.25em,
  spacebelow=1em plus 0.75em minus 0.25em,
  qed={}%qed={end definition},%{\rule{2pt}{1em}},
]{exmpstyle}

\newcommand{\logLogSlopeTriangle}[5]
{
    % #1. Relative offset in x direction.
    % #2. Width in x direction, so xA-xB.
    % #3. Relative offset in y direction.
    % #4. Slope d(y)/d(log10(x)).
    % #5. Plot options.

    \pgfplotsextra
    {
        \pgfkeysgetvalue{/pgfplots/xmin}{\xmin}
        \pgfkeysgetvalue{/pgfplots/xmax}{\xmax}
        \pgfkeysgetvalue{/pgfplots/ymin}{\ymin}
        \pgfkeysgetvalue{/pgfplots/ymax}{\ymax}

        % Calculate auxilliary quantities, in relative sense.
        \pgfmathsetmacro{\xArel}{#1}
        \pgfmathsetmacro{\yArel}{#3}
        \pgfmathsetmacro{\xBrel}{#1-#2}
        \pgfmathsetmacro{\yBrel}{\yArel}
        \pgfmathsetmacro{\xCrel}{\xArel}
        %\pgfmathsetmacro{\yCrel}{ln(\yC/exp(\ymin))/ln(exp(\ymax)/exp(\ymin))} % REPLACE THIS EXPRESSION WITH AN EXPRESSION INDEPENDENT OF \yC TO PREVENT THE 'DIMENSION TOO LARGE' ERROR.

        \pgfmathsetmacro{\lnxB}{\xmin*(1-(#1-#2))+\xmax*(#1-#2)} % in [xmin,xmax].
        \pgfmathsetmacro{\lnxA}{\xmin*(1-#1)+\xmax*#1} % in [xmin,xmax].
        \pgfmathsetmacro{\lnyA}{\ymin*(1-#3)+\ymax*#3} % in [ymin,ymax].
        \pgfmathsetmacro{\lnyC}{\lnyA+(-#4)*(\lnxA-\lnxB)}
        \pgfmathsetmacro{\yCrel}{\lnyC-\ymin)/(\ymax-\ymin)} % THE IMPROVED EXPRESSION WITHOUT 'DIMENSION TOO LARGE' ERROR.

        % Define coordinates for \draw. MIND THE 'rel axis cs' as opposed to the 'axis cs'.
        \coordinate (A) at (rel axis cs:\xArel,\yArel);
        \coordinate (B) at (rel axis cs:\xBrel,\yBrel);
        \coordinate (C) at (rel axis cs:\xCrel,\yCrel);

        % Draw slope triangle.
        \draw[#5]   (A) node[pos=0.5,anchor=north] {1}
                    (B)-- 
                    (C) node[xshift=0.2,yshift=0.2,pos=0.6,above] {#4};
    }
}

\addbibresource{references.bib}

% \usepackage{amsmath}

%\title{Solution to the 2D anisotropic diffusion equation with geometry defined by vector fields.}

\title{An efficient method for the anisotropic diffusion equation in magnetic fields.}

\author{Dean Muir, Kenneth Duru, Matthew Hole, Stuart Hudson}

\date{}

\begin{document}

\maketitle

\begin{abstract}
    We solve the anisotropic diffusion equation in 2D, where the dominant direction of diffusion is defined by a vector field which does not conform to a Cartesian grid. Our method uses operator splitting to separate the diffusion perpendicular and parallel to the vector field. The slow time scale is solved using a provably stable finite difference formulation in the perpendicular to the vector field, and an integral operator for the diffusion parallel to it. Energy estimates are shown to for the continuous and semi-discrete cases. Numerical experiments are performed showing convergence of the method, 
    and examples is given to demonstrate the capabilities of the method.
\end{abstract}

%\tableofcontents

\section{Introduction}\label{sec:Introduction}

The anisotropic diffusion equation provides a simplified model for transport phenomena in magnetic confinement fusion devices. These devices use extremely strong magnetic fields to confine a super heated plasma using, typically a few million times stronger than the Earth's magnetic field. The strength of the magnetic field results in diffusive processes being orders of magnitude faster along magnetic field lines compared to across them. The ratio of diffusion coefficients parallel and perpendicular to the field lines can exceed $\sim 10^{10}$. This disparity results in numerical errors quickly polluting the solution when the computational grid is not aligned with the magnetic field line \cite{gunter_modelling_2005}. 

\citeauthor{gunter_modelling_2005}~\cite{gunter_modelling_2005} resolved this issue by introducing a method dependent on tracing the magnetic field lines, resulting in a field aligned form of the anisotropic diffusion equation that minimises the numerical pollution~\cite{gunter_modelling_2005}.
\citeauthor{hudson_temperature_2008}~\cite{hudson_temperature_2008} showed that the isocontours of steady state solutions of the equation closely resemble features of the underlying field. This suggests that solutions to the equation can provide a proxy to other properties important for the confinement of particles \cite{hudson_temperature_2008,helander_heat_2022,paul_heat_2022}.
However, an equilibrium solver can be undesirable even when these steady state solutions are sought, since as the perpendicular diffusion vanishes, the problem becomes ill conditioned at best and ill-posed at worst. To resolve this, \citeauthor{chacon_asymptotic-preserving_2014}~\cite{chacon_asymptotic-preserving_2014} introduced a time dependent method using on operator splitting and replacing the parallel diffusion term with an integral operator formulated in earlier work by \citeauthor{del-castillo-negrete_local_2011}~\cite{del-castillo-negrete_local_2011} for the parallel diffusion.

In this paper we introduce an approach to solving a field aligned form of the anisotropic diffusion equation which is provably stable and efficient. We demonstrate this on a simplified 2D version of the problem where we consider one spatial dimension lying purely parallel to the magnetic field and the other being perpendicular to it. For simplicity, this paper replaces the magnetic field with functions for the parallel map. This simplification captures many of the challenges associated with the full 3D problem and works when the field line tracing is used. We derive energy estimates of the solution of the underlying initial boundary value problem (IBVP).  In the perpendicular direction we approximate the diffusion equation using summation-by-parts (SBP) finite difference operators \cite{nordstrom_summation-by-parts_2016}. Boundary conditions and the parallel diffusion term are implemented weakly using the simultaneous approximation term (SAT). We prove numerical stability by deriving discrete energy estimates mimicking the continuous energy estimates. The numerical method can be extended to multiple dimensions and complex geometries.

This paper will be ordered as follows.
In \S\ref{sec:preliminary} we detail the summation by parts formulation, which is used to discretise perpendicular to the magnetic field. Section \S\ref{sec:the anisotropic diffusion equation} introduces the field aligned anisotropic diffusion equation formally, details the simplifications made in this paper to reduce it to 1 dimension by introducing an integral operator for the parallel transport and provides a proof of well-posedness.
In \S\ref{sec:Numerical Approach} we introduce the semi-discrete form of the anisotropic diffusion equation using the summation by parts with simultaneous approximation terms (SBP-SAT), and the discrete form of the parallel integral operator. We also prove stability for the semi-discrete problem. The numerical approach for the discrete problem is outlined in S\ref{sec:sub:The fully-discrete approximation}.
Numerical results are presented in \S\ref{sec:Results}. This includes demonstrating convergence by the method of manufactured solutions, followed by some examples which illustrates the effects of the parallel map and the robustness of the method. We summarise the paper in section \S\ref{sec:Conclusions}.

%%%%%%%%%%%%%%%%%%%%%%%% SECTION %%%%%%%%%%%%%%%%%%%%%%%%
\section{Preliminaries}\label{sec:preliminary}
Here we introduce the summation by parts formulation, which gives the provably stable finite difference scheme used in this work.
We consider the spatial interval $x \in [0, L]$ and discretise it into $n$ grid points with a uniform spatial step $\Delta{x} >0$, having
$$
x_j =  (j-1)\Delta{x}, \quad \Delta{x} = \frac{L}{n-1}, \quad j = 1, 2, \cdots n,
$$
and  $\mathbf{u} = [u_1(t), u_2(t), \cdots u_n(t)]^T\in\R^n$ denotes the semi-discrete scalar field on the grid. Let $D_x, D_{xx}^{(k)} \in \mathbb{R}^{n\times n}$ denote discrete approximations of the first and second spatial derivatives on the grid,  that is $(D_x\mathbf{u})_j \approx \partial u/\partial x|_{x=x_j}$ and $(D_{xx}^{(k)}\mathbf{u})_j \approx \partial\left(\kappa\partial u/\partial x\right)/\partial x|_{x=x_j}$, where $\kappa >0$ is the diffusion coefficient.
%%%
The discrete operators $D_x, D_{xx}^{(k)}$ are called SBP operators if
\begin{align}\label{eq:sbp_x}
    & D_x=H^{-1}Q,  \quad  Q+Q^T= B:=\emph{diag}([-1,0,\cdots,1]), \\
    &H=H^T, \quad \mathbf{u}^T H \mathbf{u} > 0, \quad \forall \mathbf{u}\in\R^n,\\
\label{eq:sbp_xx}
    & D_{xx}^{(k)}  = H^{-1}(-M^{(k)} + BKD_x),  \quad M^{(k)} = (M^{(k)})^T, \quad\mathbf{u}^TM^{(k)} \mathbf{u}\geq 0,
\end{align}
where $K=\emph{diag}([\kappa(x_1),\kappa(x_2),\cdots,\kappa(x_n)])$.
The SBP operators $D_x$ and  $D_{xx}^{(k)}$ are called \textit{fully compatible} if
    \begin{align}\label{defn:eq:fully compatible SBP operator}
        M^{(k)} = D_x^T \left(KH\right) D_x + R_x^{(k)},  \quad R^{(k)} = (R^{(k)})^T, \quad\mathbf{u}^TR^{(k)} \mathbf{u}\geq 0.
    \end{align}
We will use fully compatible and diagonal norm SBP operators with 
$H=\Delta{x}\emph{diag}([h_1,h_2,\cdots,h_n])$, where $h_j >0$ are the weights of a composite quadrature rule. The SBP properties \eqref{eq:sbp_x}--\eqref{eq:sbp_xx} will be useful in proving numerical stability.

%%%%%%%%%%%%%%%%%%%%%%%% SECTION %%%%%%%%%%%%%%%%%%%%%%%%
\section{The anisotropic diffusion equation}\label{sec:the anisotropic diffusion equation}

The field aligned anisotropic diffusion equation \cite{gunter_modelling_2005,hudson_temperature_2008} is given by,
\begin{align}\label{eq:ADE Field Aligned}
    \pdv{u}{t} &= \nabla\cdot(\kappa_\perp\nabla_\perp u) + \nabla\cdot(\kappa_\parallel\nabla_\parallel u),
\end{align}
where $\nabla_\parallel$ is the directional derivative along the magnetic field, $\nabla_\perp=\nabla-\nabla_\parallel$, $\kappa_\perp >0$ and $\kappa_\parallel >0$ are the diffusion coefficients in the perpendicular and parallel directions, respectively. Note that $\kappa_\parallel/\kappa_\perp \gg 1$ and can exceed $\sim 10^{10}$ in many relevant applications. Equation \eqref{eq:ADE Field Aligned} is fully 3D in space. To simplify we follow previous works outlined in \S\ref{sec:Introduction}, and solve equation \eqref{eq:ADE Field Aligned} on a 2D plane in the perpendicular ($\bm{e}_1$ and $\bm{e}_2$) direction, which reduces the computational complexity significantly. The effect of the parallel diffusion is then included through an integral operator $\mathcal{P}_\parallel$. This gives,
\begin{align}\label{eq:ADE Field Aligned 2D}
    \pdv{u}{t} &= \nabla\cdot(\kappa_\perp\nabla_\perp u) + \mathcal{P}_\parallel u,
\end{align}
where 
\begin{align}\label{eq:parallel operator continuous}
    \nabla\cdot(\kappa_\parallel\nabla_\parallel) \sim \mathcal{P}_\parallel, \quad 
    \left(u, \left(\mathcal{P}_\parallel + \mathcal{P}_\parallel^{\dagger}\right) u \right)\leq 0.
\end{align}
Here $\mathcal{P}_\parallel^\dagger$ is the adjoint operator and $\left(\cdot, \cdot\right)$ denotes the standard $L_2$ scalar product defined on the 2D plane.
The operator $\mathcal{P}_\parallel$ can be constructed explicitly, for instance with a Green's function \cite{del-castillo-negrete_local_2011,del-castillo-negrete_parallel_2012,chacon_asymptotic-preserving_2014}, however this work will keep much more closely with references which use interpolation \cite{gunter_modelling_2005,hudson_temperature_2008}. 
In particular, we suppose $\mathcal{P}_\parallel\propto\mathcal{P}_f+\mathcal{P}_b$, where $\mathcal{P}_f$ and $\mathcal{P}_b$ are operators which trace the solution $u$ onto the "forward" (positive along magnetic field) and "backward" (negative along magnetic field) planes and also mimic the diffusive integral operators so that $\mathcal{P}_fu=w_f$, $\mathcal{P}_bu=w_b$ and $\norm{\mathcal{P}_f},\norm{\mathcal{P}_b}\leq1$. 
The purely parallel solution is then an average of the two projected values, so that $u_\parallel = 1/2(w_f+w_b)$.

We will also make the further simplification in this paper that $u$ is constant in $\bm{e}_1$ to reduce the number of dimensions to 2D, with solutions now in 1D. Thus \eqref{eq:ADE Field Aligned} reduces to,
\begin{align}\label{eq:ADE 1.5D}
  \pdv{u}{t}    &= \pdv{x}\left( \kappa \pdv{u}{x} \right)  + \mathcal{P}_{\parallel}u, \quad x \in [0, L], \quad \kappa = \kappa_\perp >0,
\end{align}
with smooth initial condition
\begin{align}\label{eq:initial condition} 
    u(x,0) = f(x).
\end{align}
For simplicity we will also only consider the case of Neumann boundary conditions,
\begin{align}\label{eq:neumann boundary conditions}
    \kappa \eval{\pdv{u}{x}}_{x=0} = g(t), \quad \kappa \eval{\pdv{u}{x}}_{x = L} = g(t).
\end{align}
Specifically, we will use no-flux boundaries, so that $g(t)=0$ in the analysis and examples going forward.
The following theorem proves the well-posedness of the simplified problem.
\begin{theorem}\label{eq:stability ibvp}
Consider the anisotrpic diffusion equation \eqref{eq:ADE 1.5D} subject to the smooth initial condition
\eqref{eq:initial condition} and boundary conditions \eqref{eq:neumann boundary conditions}. If $\left(u, \left(\mathcal{P}_\parallel + \mathcal{P}_\parallel^{\dagger}\right) u \right)\leq 0$ then
$$
\dv{t}\norm{{u}}^2 \le 0.
$$
\end{theorem}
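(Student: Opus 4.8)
The plan is to use the classical energy method directly on the initial boundary value problem \eqref{eq:ADE 1.5D}--\eqref{eq:neumann boundary conditions}. Working in the $L_2(0,L)$ inner product $\left(u,v\right)=\int_0^L uv\,dx$ with induced norm $\norm{u}^2=\left(u,u\right)$, I would first differentiate the norm in time and substitute the evolution equation, obtaining
\begin{align}
\half\dv{t}\norm{u}^2 = \left(u,\pdv{u}{t}\right) = \left(u,\pdv{x}\left(\kappa\pdv{u}{x}\right)\right) + \left(u,\mathcal{P}_\parallel u\right).
\end{align}
The two terms on the right-hand side are then estimated separately, the goal being to show each is nonpositive.

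For the perpendicular diffusion term I would integrate by parts once in $x$, which gives
\begin{align}
\left(u,\pdv{x}\left(\kappa\pdv{u}{x}\right)\right) = \left[u\,\kappa\pdv{u}{x}\right]_{0}^{L} - \int_0^L \kappa\left(\pdv{u}{x}\right)^2 dx.
\end{align}
The boundary term vanishes because the no-flux conditions \eqref{eq:neumann boundary conditions} enforce $\kappa\,\partial u/\partial x = g(t) = 0$ at both $x=0$ and $x=L$, and the remaining integral is nonpositive since $\kappa=\kappa_\perp>0$. For the parallel term I would symmetrise using the definition of the adjoint, $2\left(u,\mathcal{P}_\parallel u\right) = \left(u,\mathcal{P}_\parallel u\right)+\left(\mathcal{P}_\parallel^\dagger u,u\right) = \left(u,\left(\mathcal{P}_\parallel+\mathcal{P}_\parallel^\dagger\right)u\right)$, which is $\le 0$ by the hypothesis of the theorem (the one-dimensional specialisation of \eqref{eq:parallel operator continuous}). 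Adding the two estimates yields $\half\dv{t}\norm{u}^2\le 0$, which is the claim; integrating in time additionally gives the a priori bound $\norm{u(t)}\le\norm{f}$.

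The computation is short, so the real "obstacle" is careful bookkeeping rather than any analytic difficulty. I must fix the inner product with respect to which $\mathcal{P}_\parallel^\dagger$ is defined and keep it consistent throughout, ensure $u$ is regular enough in $x$ to justify the single integration by parts and that $\mathcal{P}_\parallel u\in L_2$ so the pairing $\left(u,\mathcal{P}_\parallel u\right)$ is finite (this is where the boundedness $\norm{\mathcal{P}_f},\norm{\mathcal{P}_b}\le 1$ enters), and check that the dissipativity inequality \eqref{eq:parallel operator continuous}, originally stated on the 2D plane, descends to the reduced setting with the sign intact. None of these points is a genuine difficulty; they only need to be stated cleanly before the estimate is carried out.
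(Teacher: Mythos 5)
Your proposal is correct and follows essentially the same route as the paper: the standard energy method with one integration by parts, vanishing of the boundary flux term under the homogeneous Neumann conditions, and symmetrisation of the parallel term via the adjoint to invoke the dissipativity hypothesis $\left(u,(\mathcal{P}_\parallel+\mathcal{P}_\parallel^\dagger)u\right)\le 0$. The additional bookkeeping remarks (consistency of the inner product, regularity, and the integrated bound $\norm{u(t)}\le\norm{f}$) are sensible but not part of the paper's argument.
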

\begin{proof}
   We use the energy method, that is we multiply \eqref{eq:ADE 1.5D} with the solution $u$ and integrate over the domain
\begin{align}
    \int_{0}^{L} u\pdv{u}{t} \dd x   &= \int_{0}^{L} u\pdv{x}\left( \kappa \pdv{u}{x} \right) \dd x + \int_{0}^{L} u \mathcal{P}_\parallel u \dd x.    %\label{eq:Energy est continuous IBP}
\end{align} 
Integration by parts gives
\begin{align}
    \frac{1}{2}\dv{t}\int_{0}^{L} u^2 \dd x     &= -\int_{0}^{L} \pdv{u}{x}\kappa\pdv{u}{x} \dd x + \left[ u\kappa\pdv{u}{x} \right|_{0}^{L} + \int_{0}^{L} u \mathcal{P}_\parallel u \dd x.    \label{eq:Energy est continuous IBP}
\end{align} 
Enforcing the boundary conditions \eqref{eq:neumann boundary conditions} and adding the conjugate transpose of the product gives
\begin{align}
    \dv{t}\norm{{u}}^2    &= -2\int_{0}^{L} \pdv{u}{x}\kappa\pdv{u}{x} \dd x   + \int_{0}^{L} u \left(\mathcal{P}_\parallel + \mathcal{P}_\parallel^{\dagger}\right) u \dd x \leq 0.    \label{eq:Energy est continuous IBP_final}
\end{align} 
\end{proof}
To ensure stability of the numerical method we will seek to mimic the energy estimate \eqref{eq:Energy est continuous IBP_final} at the discrete level.
%%
%%
%%
%%%%%%%%%%%%%%%%%%%%%%%% SECTION %%%%%%%%%%%%%%%%%%%%%%%%
\section{Numerical approach}\label{sec:Numerical Approach}
We will follow the method of lines by discretising the spatial variable while leaving the time variable continuous. We will approximate the spatial derivative using SBP operators \cite{mattsson_summation_2004}, while the boundary conditions and the parallel operator will be implemented weakly using penalties.
%%
%%
%\subsection{The Semi-discrete approximation and stability}\label{sec:sub:The Semi-discrete approximation and stability}
%%
The semi-discrete approximation of the  anisotropic diffusion equation \eqref{eq:ADE 1.5D} using the SBP-SAT method is
\begin{align}\label{eq:semi-discrete approximation}
     \dv{\bm{u}}{t}
        &=  D_{xx}^{(k)}\bm{u} +  \,\text{SAT} +  P_\parallel \bm{u}, \quad \bm{u}(0) = \bm{f},
\end{align}
where $D_{xx}^{(k)}$ is the SBP operator given in \eqref{eq:sbp_xx} and
\begin{align}\label{eq:discrete parallel operator and SAT}
    P_\parallel = \frac{\tau_\parallel}{2}H^{-1}\kappa_\parallel  \overbrace{\left(I - \frac{1}{2}[P_f + P_b]\right)}^{A_\parallel}, \quad  \text{SAT} = \tau_0 H^{-1} B\left(KD_x \bm{u} - \bm{g}\right),
\end{align}
are weak numerical implementations of the parallel diffusion operator \eqref{eq:parallel operator continuous} and the boundary conditions \eqref{eq:neumann boundary conditions}, $\tau_\parallel$ and $\tau_0$ are penalty parameters to be determined by requiring stability. 
Before showing stability we first prove the following lemma regarding the definiteness of the numerical parallel diffusion operator.
\begin{lemma}\label{lem:numerical parallel operator}
Consider the numerical parallel diffusion operator
\begin{align}\label{eq:discrete parallel operator}
    P_\parallel = \frac{\tau_\parallel}{2}H^{-1}\kappa_\parallel  A_\parallel, \quad A_\parallel = I - \frac{1}{2}[P_f + P_b],
\end{align}
with $\kappa_\parallel\ge 0$, $\tau_\parallel = {\alpha}/{\Delta x}$ and $\alpha \le 0$. If $\|P_f\|\le 1$ and $\|P_b\|\le 1$ then
$$
\bm{u}^T \left(A_\parallel + A_\parallel^T\right) \bm{u} \ge 0
, \quad
\bm{u}^T \left(\left(HP_\parallel\right) + \left(HP_\parallel\right)^T\right) \bm{u} %= \frac{\alpha \kappa_\parallel}{\Delta x}\bm{u}^T \left(A_\parallel + A_\parallel^T\right) \bm{u} 
\le 0, \quad \forall \bm{u}\in \mathbb{R}^n.
$$
\end{lemma}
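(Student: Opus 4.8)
The plan is to establish the two claims in sequence, since the second follows almost immediately from the first once one tracks the sign of the prefactor $\tau_\parallel \kappa_\parallel / 2$. First I would treat $A_\parallel + A_\parallel^T$. Writing $A_\parallel = I - \tfrac12(P_f + P_b)$ we get
\begin{align}
A_\parallel + A_\parallel^T = 2I - \tfrac12\left(P_f + P_f^T + P_b + P_b^T\right).
\end{align}
So for any $\bm{u}\in\mathbb{R}^n$,
\begin{align}
\bm{u}^T\left(A_\parallel + A_\parallel^T\right)\bm{u} = 2\|\bm{u}\|_2^2 - \bm{u}^T P_f \bm{u} - \bm{u}^T P_b \bm{u},
\end{align}
using $\bm{u}^T P_f^T \bm{u} = \bm{u}^T P_f \bm{u}$ for scalars. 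The key estimate is that $\|P_f\|\le 1$ (operator/spectral norm induced by the Euclidean norm) gives, by Cauchy--Schwarz, $|\bm{u}^T P_f \bm{u}| \le \|\bm{u}\|_2\,\|P_f \bm{u}\|_2 \le \|P_f\|\,\|\bm{u}\|_2^2 \le \|\bm{u}\|_2^2$, and likewise for $P_b$. Hence $\bm{u}^T P_f \bm{u} + \bm{u}^T P_b \bm{u} \le 2\|\bm{u}\|_2^2$, which yields $\bm{u}^T(A_\parallel + A_\parallel^T)\bm{u}\ge 0$. This proves the first inequality, i.e.\ $A_\parallel + A_\parallel^T$ is positive semidefinite.

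For the second claim, note $H P_\parallel = \tfrac{\tau_\parallel}{2}\kappa_\parallel A_\parallel$ from \eqref{eq:discrete parallel operator}, since $H$ and $H^{-1}$ cancel. Therefore
\begin{align}
\left(H P_\parallel\right) + \left(H P_\parallel\right)^T = \frac{\tau_\parallel \kappa_\parallel}{2}\left(A_\parallel + A_\parallel^T\right),
\end{align}
and so $\bm{u}^T\left(\left(H P_\parallel\right) + \left(H P_\parallel\right)^T\right)\bm{u} = \tfrac{\tau_\parallel \kappa_\parallel}{2}\,\bm{u}^T(A_\parallel + A_\parallel^T)\bm{u}$. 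Now $\tau_\parallel = \alpha/\Delta x$ with $\alpha \le 0$ and $\Delta x > 0$ gives $\tau_\parallel \le 0$, and $\kappa_\parallel \ge 0$, so the scalar prefactor $\tau_\parallel \kappa_\parallel/2 \le 0$; combined with the positive semidefiniteness just established, the whole expression is $\le 0$, which is the second inequality.

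The only subtlety — and the one point I would be careful to state explicitly rather than the "obstacle" in any serious sense — is the interpretation of $\|P_f\|\le 1$: the argument needs this to be the matrix 2-norm (largest singular value) so that $\|P_f\bm{u}\|_2 \le \|P_f\|\,\|\bm{u}\|_2$ holds for every $\bm{u}$. This is the natural reading given the operator-theoretic bounds $\|\mathcal{P}_f\|,\|\mathcal{P}_b\|\le 1$ imposed on the continuous tracing operators in \S\ref{sec:the anisotropic diffusion equation}, and it is exactly what makes the Cauchy--Schwarz step go through without needing symmetry, positivity, or any further structure of $P_f, P_b$. Beyond that, the proof is entirely elementary linear algebra; there is no genuine obstacle.
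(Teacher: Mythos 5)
Your proof is correct and follows essentially the same route as the paper: expand $A_\parallel + A_\parallel^T$, bound the quadratic forms of $P_f$ and $P_b$ using the norm condition, and then observe that $HP_\parallel$ is just a nonpositive scalar multiple of $A_\parallel$. You simply make explicit the Cauchy--Schwarz step that the paper leaves implicit, and you handle the boundary case $\alpha = 0$ (the paper's proof says ``choosing $\alpha<0$'' although the statement allows $\alpha\le 0$).
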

\begin{proof}
The sum $A_\parallel + A_\parallel^T$ is symmetric.
Since $\norm{P_f}$ and $\norm{P_b}\leq1$, it follows that $\bm{u}^T(2I - \frac{1}{2}\left([P_f+P_f^T] + [P_b+P_b^T]\right)\bm{u} = \bm{u}^T \left(A_\parallel + A_\parallel^T\right) \bm{u}\geq0$.
Therefore choosing $\alpha<0$ gives $\bm{u}^T\left((HP_\parallel)+(HP_\parallel)^T\right)\bm{u} = \frac{\kappa_\parallel\alpha}{2\Delta{x}}\bm{u}^T \left(A_\parallel + A_\parallel^T\right) \bm{u}\le 0$. 
\end{proof}
%%
%%
%\vspace{5pt}
We now prove the stability of the semi-discrete approximation \eqref{eq:semi-discrete approximation}.
\begin{theorem}\label{eq:semi-discrete stability}
Consider the semi-discrete approximation \eqref{eq:semi-discrete approximation} for homogeneous boundary data $\bm{g} =0$ where the numerical parallel diffusion operator $P_\parallel$ and the $SAT$ are given by \eqref{eq:discrete parallel operator and SAT} , with $\tau_\parallel = {\alpha}/{\Delta x} \le 0$ and $\tau_0 = -1$. Let $\norm{\bm{u}}_H^2 = \bm{u}^TH\bm{u}$,
 if $\|P_f\|\le 1$ and $\|P_b\|\le 1$ then
$$
\dv{t}\norm{\bm{u}}_H^2 \le 0, \quad \forall \bm{u}\in \mathbb{R}^n.
$$
\end{theorem}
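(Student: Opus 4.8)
The plan is to run the discrete energy method, mirroring the continuous argument in the proof of Theorem~\ref{eq:stability ibvp}. First I would differentiate the discrete energy $\norm{\bm{u}}_H^2 = \bm{u}^T H \bm{u}$ in time and use $H = H^T$ to write $\dv{t}\norm{\bm{u}}_H^2 = 2\bm{u}^T H \dv{\bm{u}}{t}$, then substitute the semi-discrete equation \eqref{eq:semi-discrete approximation}. The point of left-multiplying by $H$ is that it exposes the SBP structure of each operator: by \eqref{eq:sbp_xx}, $HD_{xx}^{(k)} = -M^{(k)} + BKD_x$; with $\bm{g}=0$ the SAT contributes $H\,\text{SAT} = \tau_0 BKD_x\bm{u}$; and $HP_\parallel = \tfrac{\tau_\parallel}{2}\kappa_\parallel A_\parallel$.

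Next I would collect terms. The two boundary-type contributions $2\bm{u}^T BKD_x \bm{u}$ (from $D_{xx}^{(k)}$) and $2\tau_0 \bm{u}^T BKD_x\bm{u}$ (from the SAT) cancel exactly once we set $\tau_0 = -1$; this is the discrete analogue of enforcing the no-flux Neumann conditions to annihilate the boundary term $\bigl[u\kappa\,\partial u/\partial x\bigr]_0^L$ in \eqref{eq:Energy est continuous IBP}. What remains, after symmetrizing the parallel term, is
$$
\dv{t}\norm{\bm{u}}_H^2 = -2\,\bm{u}^T M^{(k)} \bm{u} + \bm{u}^T\!\left((HP_\parallel) + (HP_\parallel)^T\right)\bm{u}.
$$

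Finally I would invoke the two semidefiniteness facts. The term $-2\bm{u}^T M^{(k)}\bm{u}$ is nonpositive since $M^{(k)}$ is positive semidefinite by \eqref{eq:sbp_xx} (and, under full compatibility \eqref{defn:eq:fully compatible SBP operator}, is bounded below by the genuinely dissipative piece $2\bm{u}^T D_x^T(KH)D_x\bm{u}$, the discrete counterpart of $-2\int_0^L u_x\kappa u_x\,\dd x$). The term $\bm{u}^T\!\left((HP_\parallel)+(HP_\parallel)^T\right)\bm{u}$ is nonpositive by Lemma~\ref{lem:numerical parallel operator}, whose hypotheses $\tau_\parallel = \alpha/\Delta x \le 0$, $\|P_f\|\le 1$, $\|P_b\|\le 1$ are exactly those assumed here. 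Adding the two gives $\dv{t}\norm{\bm{u}}_H^2 \le 0$.

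The only real subtlety is the boundary bookkeeping: one must track the $BKD_x$ term produced by $D_{xx}^{(k)}$ and verify it is precisely cancelled by the SAT penalty with $\tau_0=-1$; everything else is a direct substitution of Lemma~\ref{lem:numerical parallel operator} and the SBP definitions. A minor point is that "adding the conjugate transpose", as in the continuous proof, here just means symmetrizing a real quadratic form, i.e. rewriting $2\bm{u}^T(HP_\parallel)\bm{u}$ as $\bm{u}^T\!\left((HP_\parallel)+(HP_\parallel)^T\right)\bm{u}$, which is the form to which Lemma~\ref{lem:numerical parallel operator} applies.
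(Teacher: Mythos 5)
Your proposal is correct and follows essentially the same route as the paper's proof: multiply by $\bm{u}^T H$, use the SBP splitting $HD_{xx}^{(k)} = -M^{(k)} + BKD_x$ so that the boundary term cancels against the SAT with $\tau_0=-1$, symmetrize, and conclude from $\bm{u}^T M^{(k)}\bm{u}\ge 0$ together with Lemma~\ref{lem:numerical parallel operator}. Your bookkeeping of the $BKD_x$ cancellation is in fact slightly more careful than the paper's own displayed intermediate step.
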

\begin{proof}
 Multiply \eqref{eq:semi-discrete approximation} from the left by $\bm{u}^TH$, we have
\begin{align}
    \bm{u}^TH\dv{\bm{u}}{t} &= -\bm{u}^T(M^{(\kappa)} +  BKD_x)\bm{u} + \tau_0\bm{u}^T BK D_x \bm{u}  + \bm{u}^T H P_\parallel \bm{u}.
\end{align}
Choosing $\tau_0=-1$ and adding the transpose  of the products gives,
\begin{align}
     \dv{t}\norm{\bm{u}}_H^2 &= 
        -2\bm{u}^TM^{(\kappa)}\bm{u}
        + \bm{u}^T \left( (H P_\parallel) + (H P_\parallel)^T\right) \bm{u} \le 0.
\end{align}
\end{proof}
\subsection{The fully-discrete approximation}\label{sec:sub:The fully-discrete approximation}

We discretise the time variable $t_{l+1} = t_l + \Delta{t}_l$ with the time-step $\Delta{t}_l >0$ where $t_0 =0$ and $l = 0, 1, 2, \cdots$. The fully discrete solution at the time level $t_l>0$ is denoted $\bm{u}^l$ with $\bm{u}^0 = \bm{f}$.
Solving the fully discrete version of the semi-discrete anisotropic diffusion equation \eqref{eq:semi-discrete approximation} is performed by operator splitting. This results in a two stage solve,
\begin{align}
\label{eq:operator split stage 1}
    &\left(I+\Delta t H^{-1} M_{x}^{(k)}\right) \bm{u}^{l+\half} = \bm{u}^l + \Delta{t}\bm{F}(t_{l+1}), \quad  \bm{F}(t_{t_{l+1}}) = \tau_0 H^{-1} B\bm{g}(t_{l+1})  \\
    & \bm{w}_f^{l+\half} = P_f\bm{u}^{l+\half}, \quad \bm{w}_b^{l+\half} = P_b\bm{u}^{l+\half},
\label{eq:operator split stage 2 w}
    \\
\label{eq:operator split stage 2}
    &\bm{u}^{l+1}         = \bm{u}^{l+\half} + \frac{\Delta{t}\tau_\parallel \kappa_\parallel}{2}H^{-1}  {\left(\bm{u}^{l+1} - \frac{1}{2}[\bm{w}_f^{l+\half} + \bm{w}_b^{l+\half}]\right)}. 
\end{align}
The first stage is the perpendicular solve is a backward Euler approximation and involves solving a elliptic linear system which can be solved efficiently by the conjugate gradient method. Stage two, which includes \eqref{eq:operator split stage 2 w} and \eqref{eq:operator split stage 2}, propagates the parallel diffusion and can be computed directly.
%%
%%
%%
%%
%%%%%%%%%%%%%%%%%%%%%%%% SECTION %%%%%%%%%%%%%%%%%%%%%%%%
\section{Numerical results}\label{sec:Results}
%%
% \subsection{Verification of code by method of manufactured solutions}\label{sec:sub:mms}
%%
%%
We first demonstrate the convergence of the SBP-SAT scheme (without parallel component) by the method of manufactured solutions \cite{steinberg_symbolic_1985,roache_code_2001}. We choose the manufactured solution, with the exact solution
 % \todo[inline]{please insert the values of $\omega_x =?$, $c_x =?$ in the equation below}
\begin{align}
    {u}(x,t) = \cos(2\pi t)\sin(17\pi x + 1),
\end{align}
The convergence results are shown in Figure \ref{fig:MMS convergence}. We set a fixed time step of $\Delta t = \Delta x^2/100$. Comparison with the (dashed) reference lines shows both slightly over-perform their expected convergence rate with $\sim\mathcal{O}(h^{2.5})$ for the second order, and $\sim\mathcal{O}(h^{4.5})$ for the fourth order.
\begin{figure}[t!]
    \centering
    \begin{tikzpicture}
        \begin{axis}[ymode=log,xmode=log,
            xlabel={grid size}, ylabel={relative error},
            legend pos=south west,
            width=0.67\textwidth]
            \addplot table[x index=0,y index=2,col sep=tab] {figures/1DMMSTests_O2.csv};
            \addplot table[x index=0,y index=2,col sep=tab] {figures/1DMMSTests_O4.csv};
            
            \logLogSlopeTriangle{0.95}{0.4}{0.65}{2.0}{blue,dashed};
            \logLogSlopeTriangle{0.95}{0.4}{0.5}{4.0}{red,dashed};
            
            \addlegendentry{2nd order};
            \addlegendentry{4th order};
        \end{axis}
    \end{tikzpicture}
% \hfill
    \caption{Convergence rates for second (blue) and fourth (red) order summation by parts operators with first order time solver. Dashed lines are references lines and have the expected slopes for associated convergence rates.
    }
    \label{fig:MMS convergence}
\end{figure}
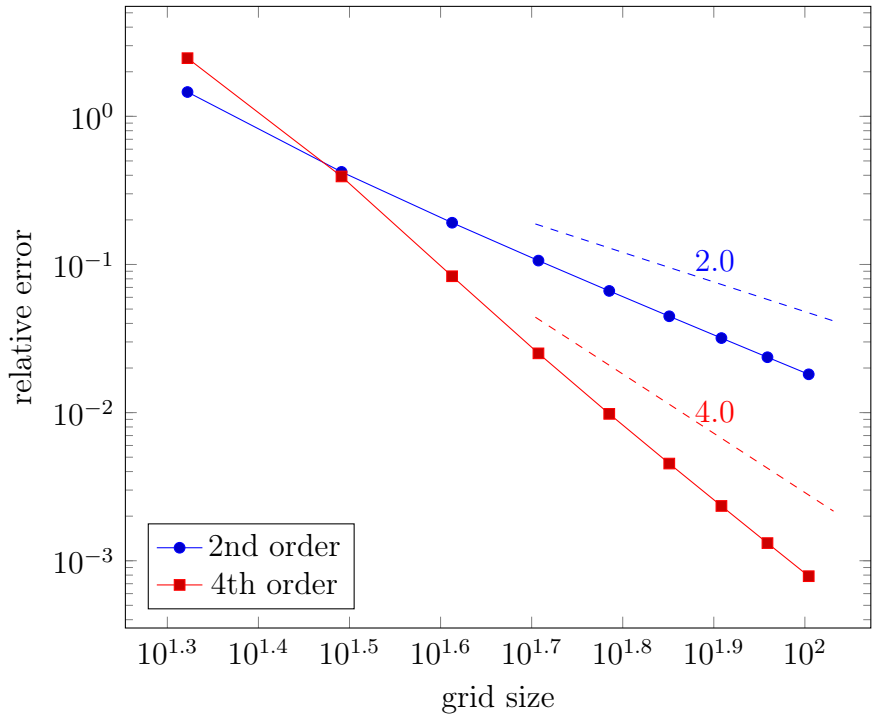

\subsection{Examples}
We now present examples of the field aligned 1D anisotropic diffusion equation, which demonstrate the effect of the the parallel operator on the solution. These are shown in Figures \ref{fig:example 1} and \ref{fig:example random map}. In all cases the boundary conditions are no flux, $\partial_x u|_{x=0} = \partial_x u|_{x=1} = 0$, and the diffusion coefficients in the perpendicular and parallel directions are $\kappa_\perp=10^{-3}$ and $\kappa_\parallel=1$.

The initial condition for examples in Figure \ref{fig:example 1} is a Gaussian,
\begin{align}\label{eq:example exp IC}
    u(x,0) = \exp\left( \frac{-(x-0.5)^2}{0.02} \right).
\end{align}
The parallel map in the forward and backward directions on the left and right are,
\begin{align}
\label{eq:example mapping}
    F_1(x) = 1 - \exp(-x)    \quad\text{and}\quad
    F_2(x) = \frac{1}{2}(\tanh(2\pi x - \pi) + 1),
\end{align}
respectively. The point mapping is visualised in the top row of Figure \ref{fig:example 1}.
\begin{figure}[h]
    \centering
\frame{
    \includegraphics[width=0.95\textwidth]{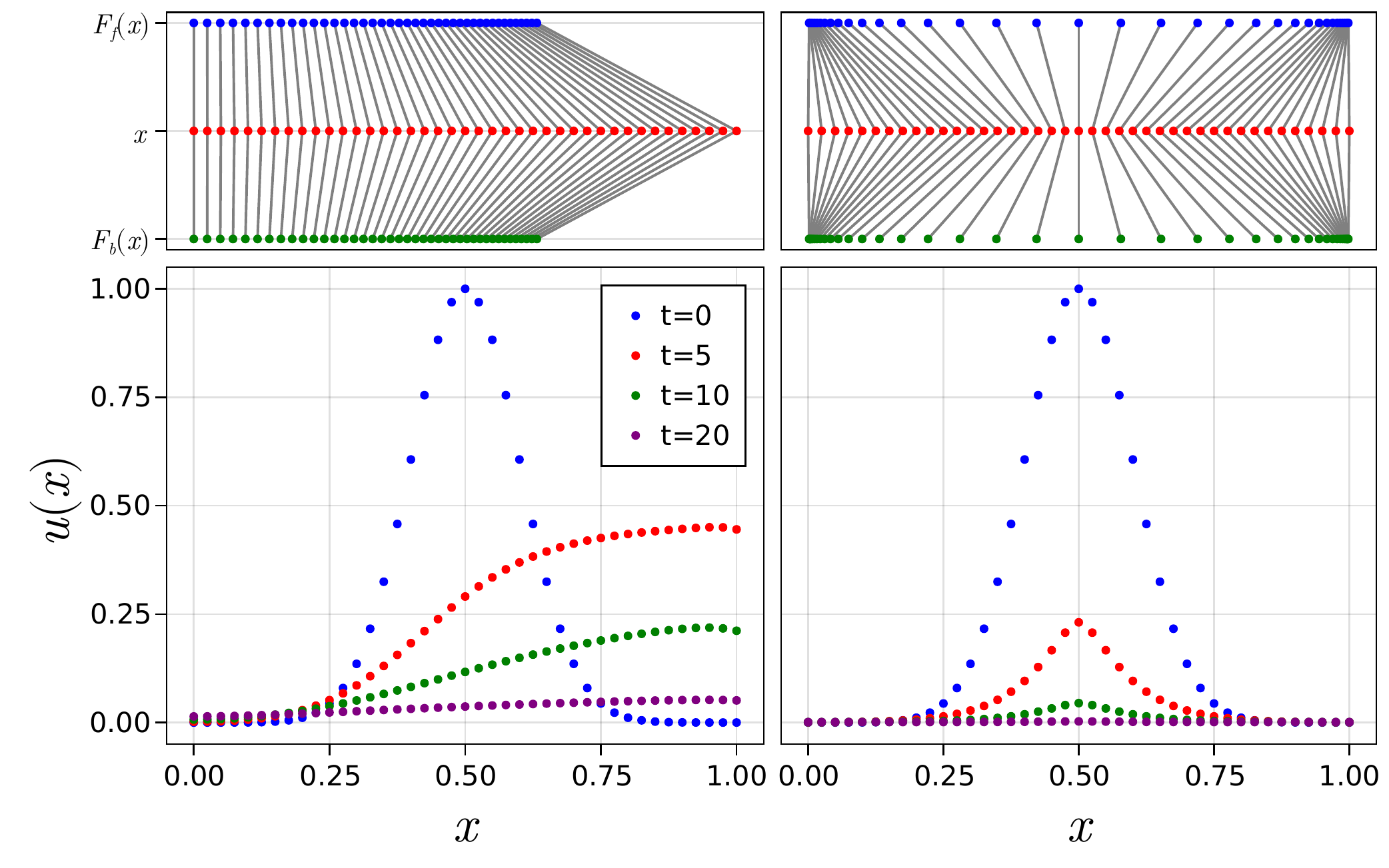}
}
    \caption{
    Top left: Parallel mapping given by $F_1$ in equation \eqref{eq:example mapping}.
    Bottom left: Solution to \eqref{eq:ADE 1.5D} at various times with parallel map given by equation $F_1$ in \eqref{eq:example mapping}.
    Top right: point mapping as per $F_2$ in \eqref{eq:example mapping}.
    Bottom right: Solution to \eqref{eq:ADE 1.5D} at various times with parallel map given by equation $F_2$ in \eqref{eq:example mapping}.
    }
    \label{fig:example 1}
\end{figure}
Solutions in Figure \ref{fig:example 1} tend towards a uniform one as expected with no-flux boundaries. Given the point mapping by $F_1$, we see the right hand side of the solution maps to the centre of the Gaussian profile, which explains the increase in $u$ on the right hand side. The point mapping by $F_2$ diffuses into low $u$ regions, flattening out the solution quickly.

% \subsection{Example: Random point mapping}
Examples in Figure \ref{fig:example random map} demonstrate both the robustness of the approach and the effect of the operator on a standard 1D solution to the equation. Here the forward and backward maps randomly map points in the domain. Equations $F_1$ and $F_2$ in \eqref{eq:example mapping} are now used as the initial conditions in the left and right figures respectively.
\begin{figure}[h]
    \centering
\frame{
    \includegraphics[width=0.95\columnwidth]{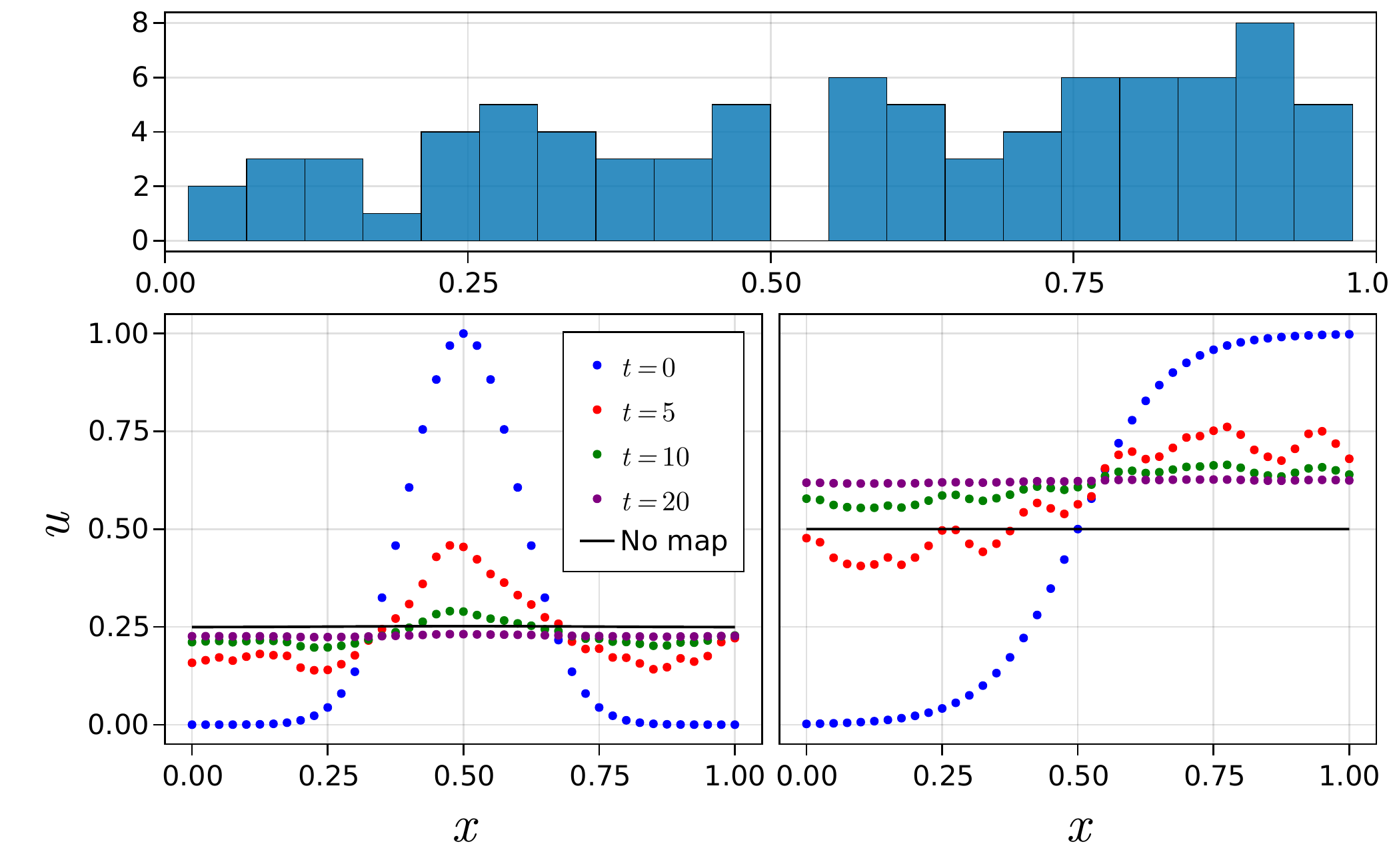}
}
    \caption{\textit{Top}: Distribution of points on forward and backward planes, showing a slight bias towards the right side of the domain. 
    \textit{Bottom left}: Evolution of solution with random point mapping and initial condition specified by $F_1$ in \eqref{eq:example mapping}.
    \textit{Bottom right}: Same as right figure, but initial condition given by $F_2$ in \eqref{eq:example mapping}.
    The black line in the lower figures corresponds to the 1D solution without parallel mapping.}
    \label{fig:example random map}
\end{figure}

Solutions are again uniform as expected and can be compared to a solution with no parallel mapping (black line). They deviate from this solution because the parallel parallel map has a slight bias towards right side of the domain as shown in the top of Figure \ref{fig:example random map}. In the case of the Gaussian, this results in diffusion into the low $u$ region, reducing the final state of the solution. In the sigmoid function, points are mapped into the high $u$ region, resulting in a slightly higher final solution.

\section{Conclusions}\label{sec:Conclusions}

We have derived a stable and efficient numerical method to solve the anisotropic diffusion equation in 2D geometry not aligned with a regular mesh by using an operator splitting technique. We achieve this with the use of SBP with SAT in the perpendicular solve and an integral operator for the parallel solve. The time steps are evolved by using an implict Euler conjugate gradient method.

Our method produces accurate results, verified by use of the method of manufactured solutions. The results show the SBP-SAT method over-performs the expected second and fourth order convergence rates.
Examples show our method solving the field aligned anisotropic diffusion equation with a variety of parallel maps. Moreover, the random point mapping examples shows our approach is robust, since the method returns expected results.

Future work will extend this method to 2D planes in the perpendicular direction and use parallel maps given by systems of ordinary differential equations. Specifically we will be interested in extending the method to geometry provided by real magnetic fields.

\printbibliography

\end{document}